\newtheorem{theorem}{Theorem}
\newtheorem{lemma}{Lemma}
\newtheorem{definition}{Definition}
\newcommand{\ceil}[1]{\left\lceil#1\right\rceil}
\newcommand{\power}[2]{\left(#1\right)^{#2}}
\title{The Speed and Threshold of the Biased Hamilton Cycle Game}
\date{\today}
\author[1]{Noah Brustle}
\author[1]{Sarah Clusiau}
\author[1]{Vishnu V. Narayan}
\author[2]{Ndiam\'{e} Ndiaye}
\author[1]{Bruce Reed}
\author[3,4]{Ben Seamone\thanks{Emails:\\ \texttt{noah.brustle@mail.mcgill.ca, sarah.clusiau@mail.mcgill.ca,}\\ \texttt{vishnu.narayan@mail.mcgill.ca, ndiame.ndiaye@mail.mcgill.ca,}\\ \texttt{breed@cs.mcgill.ca, seamone@iro.umontreal.ca}}}
\affil[1]{School of Computer Science, McGill University, Montreal, Canada}
\affil[2]{Department of Mathematics and Statistics, McGill University, Montreal, Canada}
\affil[3]{Mathematics Department, Dawson College, Montreal, Canada}
\affil[4]{D\'{e}partement d'informatique et de recherche op\'{e}rationnelle, Universit\'{e} de Montr\'{e}al, Montreal, Canada}
\begin{document}
  \maketitle
\begin{abstract} 
We show that there is a constant C such that for any $b<\frac{n}{\ln{n}}-\frac{Cn}{(\ln{n})^{3/2}}$,
Maker wins the Maker-Breaker Hamilton cycle game in $n+\frac{Cn}{\sqrt{\ln{n}}}$ steps.
\end{abstract}

\section{Introduction}

The (Maker-Breaker) Hamilton cycle game on a graph $G$ is played by two players who alternately select edges from $G$, with Breaker choosing first.
Maker wins when she has chosen the edges of a Hamilton cycle. If she never does so, Breaker wins.   
We restrict our attention, and the definition of these games, to graphs $G$ that are cliques.

Chv\'{a}tal and Erd\H{o}s~\cite{CE78} proved that for sufficiently large $n$  and $G$ a clique on $n$ vertices, 
if both players play optimally then Maker wins the Hamilton cycle game, and can ensure she does so in her first $2n$ moves.  

Chv\'{a}tal and Erd\H{o}s~\cite{CE78} also introduced the biased version of this game where for some integer $b$, Breaker selects $b$ edges in each turn. They  focused on 
 the largest value of  $b$ for which Maker wins this game.  They showed that for any positive $\epsilon$, for $n$ sufficiently large, if $b>\frac{(1+\epsilon)n}{\ln{n}}$  then 
Breaker can ensure that he has selected all the edges incident to one of the vertices. Hence Breaker wins  the  biased Hamilton cycle  game
for such values of $b$. Krivelevich~\cite{Kri11} obtained an essentially matching lower bound, showing that for   $b<\frac{(1-\epsilon)n}{\ln{n}}$  
Maker  wins the biased Hamilton cycle  game\footnote{\cite{Kri11} states a slightly stronger result but a careful reading of the proof 
shows that the  result stated here is actually what is proven. In particular the use of Lemma 3  stated therein means the approach given there can do no better.}. 

For  Maker to win the (unbiased) Hamilton cycle  game  she must make at  least $n$ moves so as to obtain the edges of the cycle.  Further Breaker can prevent Maker from just choosing 
the edges of a Hamilton cycle  by stealing the last edge. So Maker cannot ensure she wins in fewer than $n+1$ moves. Hefetz and Stich~\cite{HS09} proved that this bound is tight; i.e., that she can always win in this many moves.  

In this paper we focus on the number of moves Maker needs to win the biased Hamilton cycle  game.  We are particularily interested in 
the values of $b$ for which Maker wins in $n+o(n)$ moves. This question has been previously studied by 
Mikalacki and Stojakovic in \cite{MS17}. They  showed  if $b=o\left(\sqrt{\frac{n}{(\ln{n})^5}}\right)$ then Breaker can win in $(1+o(1))n$ moves. 
They asked whether this remained true for larger values of $b$. We answer this question in the affirmative by showing:

\begin{theorem}
\label{themaintheorem}
There is a constant $C$ such that  for  $n$ sufficiently large  if $b \le \frac{n}{\ln{n}}-\frac{Cn}{(\ln{n})^{3/2}}$ then Maker can win the biased Hamilton cycle  game in $n+\frac{Cn}{(\ln{n})^{1/2}}$ steps. 
\end{theorem}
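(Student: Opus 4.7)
The plan is to follow the standard P\'osa rotation--extension paradigm in three phases whose total cost is $n + O(n/\sqrt{\ln n})$ Maker moves.

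\textbf{Phase 1 (building a near-spanning expander).} In approximately $n - \Theta(n/\sqrt{\ln n})$ moves Maker builds a subgraph $H$ of $K_n$ which is a good P\'osa expander --- every $S\subseteq V$ with $|S|\le n/4$ satisfies $|N_H(S)\setminus S|\ge 2|S|$ --- and in which some longest path $P$ covers all but $O(n/\sqrt{\ln n})$ vertices. Every Maker move in this phase is chosen so as to simultaneously extend a current path-endpoint to a fresh vertex and reduce an expansion deficit at a sparse vertex, governed by a single potential function.

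\textbf{Phase 2 (absorbing missing vertices).} In $O(n/\sqrt{\ln n})$ additional moves Maker splices the remaining uncovered vertices into $P$, one at a time. Since Breaker has so far claimed only a $(1-o(1))$-fraction of the edges at any particular vertex, a free edge at the vertex to be absorbed always exists, and one or two P\'osa rotations inside $H$ suffice to insert it into $P$. This costs at most $O(1)$ Maker edges per inserted vertex.

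\textbf{Phase 3 (closing the cycle).} In $O(\log n)$ final moves Maker closes the resulting Hamilton path into a Hamilton cycle via boosters. The booster-density lemma for good expanders gives $\Omega(n^2)$ boosters at each stage where the current longest path is not Hamilton; since Breaker has claimed only $O(n^2/\ln n)$ edges in total, a free booster always exists.

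The main obstacle is Phase 1: because $b$ is so close to the threshold $n/\ln n$, by the end Breaker occupies a $\Theta(1/\ln n)$-fraction of all edges, and at some vertices the free graph may have density only $O(1/\sqrt{\ln n})$. Consequently the analysis must be sharp at every step --- Maker cannot afford a single move that fails to advance both expansion and path coverage. I would design the strategy around a potential function that rewards simultaneous progress on both fronts, and argue via a Chernoff-style concentration bound on Breaker's distribution of edges that, given the $Cn/(\ln n)^{3/2}$ slack in the bias bound, the potential grows at the required uniform rate throughout Phase 1. The exponents $1/2$ in the speed bound and $3/2$ in the bias bound should emerge from balancing a $\sqrt{\ln n}$-slack in the per-vertex free edge count against a $\sqrt{\ln n}$ per-vertex absorption cost in Phase 2, yielding the scaling claimed in Theorem~\ref{themaintheorem}.
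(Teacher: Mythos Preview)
Your Phase~1 is impossible as stated, and this is where the whole proposal breaks. You ask Maker to build, in fewer than $n$ moves, a graph $H$ on (essentially) all of $V$ in which every $S$ with $|S|\le n/4$ satisfies $|N_H(S)\setminus S|\ge 2|S|$. But any such P\'osa expander has minimum degree at least $2$ (take $|S|=1$), hence at least $n$ edges; and with exactly $n$ edges and minimum degree $2$ every component is a cycle, which has no nontrivial expansion at all. So a genuine P\'osa expander on $n-o(n)$ vertices needs $(1+c)n$ edges for some fixed $c>0$, and Maker simply cannot afford that within the $n+o(n)$ budget. No potential function can circumvent this counting obstruction.

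The paper's key idea is precisely to avoid building a spanning expander. Maker fixes a set $S_0$ of size $\Theta(n/\sqrt{\ln n})$ and builds the expander \emph{only on $S_0$} (each vertex of $S_0$ gets about ten random out-edges into $S_0$, costing $O(|S_0|)=O(n/\sqrt{\ln n})$ edges). The remaining $\approx n$ edges are spent concatenating the vertices of $V\setminus S_0$ into $O(n/\sqrt{\ln n})$ long paths. In the second phase, P\'osa rotations are performed only through vertices of $S$ (``limited rotations''), but each extension step swallows an entire prebuilt path rather than a single vertex, so only $O(n/\sqrt{\ln n})$ booster moves are needed. This localisation of the expander to a small set is exactly what makes the edge count $n+O(n/\sqrt{\ln n})$ achievable.

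Two smaller issues: your ``Chernoff-style concentration on Breaker's distribution of edges'' is not meaningful, since Breaker is adversarial; the randomness in the paper lies in \emph{Maker's} choices inside $S_0$, and one shows a positive probability of success against any Breaker. And the danger argument controlling high-degree (``troublesome'') vertices --- the place where the slack $Cn/(\ln n)^{3/2}$ is actually used --- is absent from your sketch; without it Breaker can isolate a vertex before Maker gets to it.
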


We note that this improves the upper bound from \cite{Kri11} on the bias that ensures that Maker can win this game. 

In Section \ref{sec:sketch}, we discuss the 3-stage strategy of Krivelevich, and provide a broad overview of our new 2-stage approach for Maker.  Section \ref{sec:strategy} is devoted to the formalization of Maker's strategy.  The proof of Theorem \ref{themaintheorem} is split over the final two sections of the paper.  Section \ref{sec:time} contains an analysis of the length of the game, and the proof that the strategy may be successfully implemented is contained in Section \ref{sec:thedetails}.

We close this section with some basic definitions. At any point in the game, for a vertex $v$, we use $d_B(v)$ to denote the degree of $v$ in the graph $B$ chosen by Breaker, and $d_M(v)$ to denote its degree in the graph $M$ chosen by Maker. We let $E_B$ and $E_M$ be the edges picked by Breaker and Maker respectively.

\section{A Proof Sketch}\label{sec:sketch} 
The approach we use to prove Theorem \ref{themaintheorem} builds on the technique adopted by Krivelevich in \cite{Kri11}, who showed  Maker can win in  $14n$ turns
if the bias is at most $\frac{(1-\epsilon)n}{\ln{n}}$. To do so he showed that there is a random strategy for Maker which yields a positive probability that 
she wins within $14n$ turns .  Since the game  (curtailed after $14n$ turns) cannot be a draw, with optimal play the same  player always wins,  so this implies his result. 

The random strategy Krivelevich suggested for Maker  is a 3-stage approach, whose second two phases we collapse to one. 
In the first phase Maker, with positive probability, chooses a spanning subgraph  $F$  with  good expansion properties that has at most $12n$ edges.  Specifically for some constant $\alpha=\alpha_{\epsilon}$, there is a postive probability   that for any set $X$ of at most $\alpha n$ vertices, the number of vertices in $V-X$ with a neighbour in $X$ has size at least $2|X|$. We note that this implies every component of $F$  has at least $\alpha n$ vertices. 

Krivelevich showed that  the expansion properties ensured  that for any longest path $P$ of  any supergraph $F'$  of $F$  chosen by Maker, if $v$ is an endpoint of P, then there are at least 
$\alpha n$ vertices that were endpoints of a path $P'$ on $V(P)$ whose other endpoint was $v$. Applying this result to each of these $\alpha n$ vertices we obtain that 
there are at least $(\alpha n)^2$ pairs of vertices that can be the pair of endpoints  of a path on $V(P)$. Because there will only be $14n$ turns, Breaker 
chooses  fewer than  $\frac{14n^2}{\ln{n}}$ edges.  It follows that one of these 
pairs are not the endpoints of an edge chosen by Breaker, and Maker can choose the edge between them. After she does so  there is a cycle  on $V(P)$ in her graph. 

If $V(P)$ is not a component of Maker's  graph,  then her graph contains   an edge $uv$ with $u$ not on P but $v$ on $P$  and hence a  path on $V(P) \cup \{u\}$ with one endpoint  $u$. 
So Maker has increased the length of the longest path in her chosen graph.

On the other hand,  If $V(P)$ is a component of her graph then it has at least $\alpha n$ vertices. If V(P) is the unique component then 
Maker  has chosen the edges of a  Hamilton cycle and is done. Otherwise there is another component $U$ of her graph  that by the expansion properties has at least $\alpha n$ vertices.
In the next turn, Maker chooses one of the  $(\alpha n)^2$  possible edges between $V(P)$  and $U$ thereby increasing the length of the longest path in her graph.

Our new strategy ensures that, with positive probability, the first phase ends with Maker having chosen the edges of a graph with desirable expansion properties on a set $S$ of $\Theta(\frac{n}{\sqrt{\ln{n}}})$ vertices, as well as the edges of a set of  $O(\frac{n}{\sqrt{\ln{n}}})$ paths that partition $V-S$.  The graph Maker has chosen at  this point has at most  $22|S|+|V-S|=n+o(n)$ edges.  In the second stage, Maker adapts
Krivelevich's approach, using the expansion property to extend the length of the longest path every two turns. However, she usually 
extends the longest path by many vertices adding all of the vertices on some  substantial subpath of one of the paths in the partition of V-S. This allows Maker to extend to 
a Hamilton path and then  a Hamilton cycle  in $o(n)$ turns. 

Maker's job is straightforward unless Breaker chooses a graph with high degree.
We define a vertex $v$ to be {\it troublesome} if $d_B(v)>\frac{n}{\sqrt{\ln{n}}}$.  We will show that, since there are fewer than $2n$ turns, $B$ has fewer than $\frac{2n^2}{\ln{n}}$ edges and hence there are fewer than  $\frac{2n}{\sqrt{\ln{n}}}$  troublesome vertices.  The fact that there are not ``too many'' troublesome vertices will allow Maker to adapt the straightforward strategy outlined in the previous paragraph to ensure that she can build the desired subgraph.

We finish the overview with some notation.  Whenever Maker chooses an edge she first chooses  a vertex  $v$ and then choose an edge  $vw$ incident to $v$.
We think of Maker building a directed graph where we direct this edge from $v$ to $w$. For a vertex $v$, we let $d'_M(v)$ be the outdegree 
of $v$ in Maker's (directed) graph. We let $d^-_M(v)$ be the number of edges from $v$ chosen before $v$ becomes troublesome and $d^+_M(v)$ 
be the number of edges chosen from $v$ after it becomes troublesome. So $d'_M(v)=d^-_M(v)+d^+_M(v)$ and if $v$ is not troublesome $d^+_M(v)=0$. The {\it danger} of a vertex $v$, denoted $\partial(v)$, is $d_B(v)-bd^+_M(v)$. 

\section{Maker's strategy}\label{sec:strategy}

Before beginning the game, Maker chooses an arbitrary set $S_0$ of $\lceil \frac{2000n}{\sqrt{\ln{n}}} \rceil $ vertices. 	
These vertices will all end up in $S$.  The family ${\cal F}_0$ of paths partitioning $V-S_0$ consists of $|V-S_0|$ singleton paths.  We will define 
for each $i>1$ a set $S_i$ of vertices and a partition ${\cal F}_i$ of $V-S_i$ into paths. 
In each phase, after Breaker's $(i+1)^{st}$ turn, every vertex of $V-S_i$ that becomes troublesome is added to $S_i$.
For each such vertex $v$ in turn, we delete the path P containing $v$ from ${\cal F}_i$ and add the components of 
$P-v$ to it. 

\subsection{Phase 1: Building skeleton paths}

In the  $i^{th}$ turn, Maker proceeds as follows, depending on which case applies.  Recall that, by the design of the strategy, all troublesome vertices currently lie in $S_{i-1}$.  We say that two paths $P$ and $P'$ of ${\cal F}_{i-1}$ are \textit{$uv$-available for Maker} if $u$ is an endpoint of $P$, $v$ is an endpoint of $P'$, $d_M(u),d_M(v) \le 1$, and $uv$ has not been claimed by Breaker.
\vskip0.2cm

\noindent\textbf{Case 1:} Every troublesome vertex has $d^+_M(v) \geq 10$.

\begin{enumerate}
 \item If there is a vertex $v$ in $S_{i-1}$ with $d'_M(v)<10$, then
    \begin{itemize}
        \item choose a uniform element $w^*$  of $\{w|w \in S_0, vw \notin B\}$ and add the edge $vw^*$ to $M$;
        \item set $S_i=S_{i-1}$ and ${\cal F}_i={\cal F}_{i-1}$.
    \end{itemize}

\item If every vertex of $S_{i-1}$ has $d'_M(v) \geq 10$, then
    \begin{enumerate}
        \item if there are two elements $P$ and $P'$ of ${\cal F}_{i-1}$ that are $uv$-available then
            \begin{itemize}
                \item add $uv$ to $M$;
                \item set $S_i=S_{i-1}$ and ${\cal F}_i={\cal F}_{i-1}-P-P'+P^*$ where $P^*$ is the concatenation of $P,uv$, and $P'$.
            \end{itemize}
        \item else end Phase 1, move to Phase 2, and set $i^*={i-1}$.
    \end{enumerate}
\end{enumerate}

We note that any vertex satisfying (1) cannot be troublesome.

\noindent\textbf{Case 2:} \textit{There is a troublesome vertex $v$ such that $d^+_M(v)<10$.}

\noindent Select $v$ to be a troublesome vertex with $d^+_M(v)<10$ having maximum danger (ties broken arbitrarily). Then
\begin{enumerate}
    \item[]\begin{itemize}
    \item choose any $w$ in $V-S_{i-1}$ such that $vw$ is not in $M$ or $B$;
    \item add $vw$ to $M$;
    \item set $S_i=S_{i-1} \cup \{w\}$ and ${\cal F}_i$ to be the set obtained from ${\cal F}_{i-1}$ by deleting the path $P$ containing $w$ and adding the components of $P-w$.
\end{itemize}
\end{enumerate}

\subsection{Phase 2: Adding boosters}

We note that Maker moves first in this stage.  Let $P_{i^*}$ denote a longest path in $M$ that contains at least one vertex from $S_{i^*}$.
She will build a sequence of paths $P_{i^*},P_{i^*+1},...,$ with the vertex set of $P_j$ contained in the 
vertex set of $P_{j+1}$.  At each stage, $P_i$ is chosen to be the longest path in $M$ that contains all vertices of $P_{i-1}$.

For $i>i^*$, Maker proceeds as follows on the $i^{th}$ turn depending on which case applies.\\

\noindent\textbf{Case 1:} Every troublesome vertex has $d^+_M(v) \geq 10$.

\begin{enumerate}
 \item If there is a vertex $v$ in $S_{i-1}$ with $d'_M(v)<10$, then
    \begin{itemize}
        \item choose a uniform element $w^*$  of $\{w|w \in S_0, vw \notin B\}$ and add the edge $vw^*$ to $M$;
        \item set $S_i=S_{i-1}$ and ${\cal F}_i={\cal F}_{i-1}$.
    \end{itemize}

\item If every vertex of $S_{i-1}$ has $d'_M(v) \geq 10$, then
    \begin{enumerate}
    \item if there is some $v$ that is an endpoint of a path of ${\cal F}_i$ that satisfies $d'_M(v)<10$, then
        \begin{itemize}
            \item choose a uniform element $w^*$ of $\{w|w \in S_0, vw \notin B\}$ and add the edge $vw^*$ to $M$;
            \item set $S_i=S_{i-1}$,${\cal F}_i={\cal F}_{i-1}$.
        \end{itemize}
    \item if every endpoint of a path in ${\cal F}_i$ satisfies $d'_M(v)\geq 10$, then
        \begin{enumerate}
            \item[(i)] if there is no cycle on $V(P_i)$ but there is a path on $V(P_i)$ with endpoints $u$ and $v$ in $S_0$ such that $uv$ not been chosen by Breaker, then
                \begin{itemize}
                    \item add $uv$ to $M$;
                    \item set $S_i = S_{i-1}$ and ${\cal F}_i={\cal F}_{i-1}$. 
                \end{itemize}
            \item[(ii)] else end Phase 2.
        \end{enumerate}
\end{enumerate}
\end{enumerate}

\noindent\textbf{Case 2:}\textit{ There is a troublesome vertex $v$ such that $d^+_M(v)<10$.}

\noindent Select $v$ to be a troublesome vertex with $d^+_M(v)<10$ having maximum danger (ties broken arbitrarily). Then
\begin{enumerate}
    \item[]\begin{itemize}
    \item choose any $w$ in $V-S_{i-1}$ such that $vw$ is not in $M$ or $B$;
    \item add $vw$ to $M$;
    \item set $S_i=S_{i-1} \cup \{w\}$ and ${\cal F}_i$ to be the set obtained from ${\cal F}_{i-1}$ by deleting the path $P$ containing $w$ and adding the components of $P-w$.
\end{itemize}
\end{enumerate}

\section{Analyzing the length of the game}\label{sec:time}

Assuming that Maker is able to build her desired subgraph as claimed, we now prove that the game terminates in $n+o(n)$ moves.

\begin{theorem}
If Maker wins by applying Phase 1 then Phase 2, then she does so in at most $n+\frac{52000n}{\sqrt{\ln{n}}}$ turns.
\end{theorem}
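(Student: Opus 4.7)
The plan is to bound the total number of Maker's moves by classifying her edges according to the branch of the strategy that produced them, and then summing the contributions. First I would gather the basic cardinalities. Assuming the game lasts fewer than $2n$ turns (a bound weaker than, and subsumed by, what we are trying to prove), Breaker picks at most $2nb < 2n^2/\ln n$ edges, so as noted in the sketch the number $T$ of troublesome vertices satisfies $T\le 2n/\sqrt{\ln n}$. Every invocation of Case~2 introduces at most one new vertex into $S$ and can be triggered at most ten times per troublesome vertex (by the $d^+_M(v)<10$ test), so $|S|$ never exceeds $|S_0|+T+10T=O(n/\sqrt{\ln n})$.

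I would then bound each family of Maker's edges in turn. Case~1.1 edges (in either phase) contribute at most $10|S|$ in total, since each $v\in S$ passes the $d'_M(v)<10$ test at most ten times. Case~2 edges contribute at most $10T$ by the analogous cap on troublesome vertices. The Case~1.2(a) edges of Phase~1 are path-merges, and each strictly decreases $|{\cal F}_i|$; since $|{\cal F}_i|$ only grows via splits caused by Case~2 events or the auto-insertion of a newly troublesome vertex (each contributing at most $+1$), the total number of merges is at most $|V-S_0|+O(T)$. For Case~1.2(a) in Phase~2, any vertex that ever appears as an endpoint of a path in ${\cal F}$ can serve as tail at most ten times; the number of such endpoints during Phase~2 is at most $2p+2\cdot(\text{Phase 2 Case 2 events})=O(p+T)$, where $p=|{\cal F}_{i^*}|$ is the number of paths at the start of Phase~2. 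Finally, the number of booster edges (Case~1.2(b)(i)) is $O(p)$, because each booster is followed by an extension of $P_i$ that absorbs essentially a whole remaining subpath of ${\cal F}$.

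Summing these contributions and using $|V-S_0|=n-|S_0|$ yields
\[
|E_M| \;\le\; |V-S_0| + 10|S| + 10T + O(T) + 10\cdot(2p+O(T)) + O(p) \;=\; n-|S_0| + O\!\left(\tfrac{n}{\sqrt{\ln n}}\right).
\]
Substituting $|S_0|=\lceil 2000n/\sqrt{\ln n}\rceil$ and keeping track of the hidden constants (each $10$ from the outdegree caps, the factor $2$ in $T\le 2n/\sqrt{\ln n}$, and the constants hidden in the bound on $p$), one verifies that the net coefficient comes out below $52000$, yielding the bound $n+52000n/\sqrt{\ln n}$.

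The main obstacle is the bound $p=O(n/\sqrt{\ln n})$ on the number of ${\cal F}$-paths at the end of Phase~1, together with the $O(p)$ bound on the number of boosters. Neither follows from a simple degree cap: the former relies on the termination criterion of Phase~1 (no $uv$-available pair exists) combined with the expansion property guaranteed by the random skeleton, while the latter requires showing that each booster extends $P_i$ by a linear-size chunk of some ${\cal F}$-path. Both are structural claims established in Section~\ref{sec:thedetails}, and I would take them as given for the present counting argument.
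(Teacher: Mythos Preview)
Your decomposition of Maker's moves by strategy branch is the same idea the paper uses, but two genuine gaps remain.

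First, assuming at the outset that the game lasts fewer than $2n$ turns is circular: that is a weakening of the conclusion, not something you may take for free. The paper avoids this by bootstrapping. From the strategy alone one has $d'_M(v)\le 20$ for every vertex at the end of Phase~1 (each branch caps its contribution to outdegree), so Phase~1 lasts at most $20n$ turns unconditionally; a parallel direct argument gives at most $21n$ turns in Phase~2. Only after these crude $O(n)$ bounds are in hand does the paper deduce $T\le 2n/\sqrt{\ln n}$, refine $|S_i|$ and $|{\cal F}_i|$, and extract the sharp estimate. Without some such unconditional starting bound your counting never gets off the ground.

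Second, the two ``obstacles'' you defer are not established in Section~\ref{sec:thedetails}, and your description of what they require is wrong. The bound $|{\cal F}_{i^*}|=O(n/\sqrt{\ln n})$ uses no expansion whatsoever: if more than $O(n/\sqrt{\ln n})$ paths had both endpoints of $M$-degree at most~$1$, the $\Theta(p^2)$ candidate endpoint pairs would outnumber Breaker's $O(n^2/\ln n)$ edges, contradicting the Phase~1 termination criterion; the remaining paths (those with an endpoint of $M$-degree~$2$) are accounted for directly by $|S_{i^*}|$. Likewise the booster bound does not need ``each booster extends $P_i$ by a linear-size chunk''---that claim is false, since paths in ${\cal F}_i$ may be single vertices. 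The correct argument, given inline in the paper's proof, is that once $P_i$ meets the interior of some $Q\in{\cal F}_i$ it must contain all of $Q$; hence the number of indices $j$ with $|V(P_{j-1})|<|V(P_j)|$ is at most $|S_i|+3|{\cal F}_i|$, and this bounds the number of booster moves. Both arguments belong to the proof of this theorem itself and cannot be outsourced.
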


\begin{proof}
We derive our bound on the number of turns taken by bounding the sizes of $S_i$ and ${\cal F}_i$.

At the end of Phase 1, every vertex satisfies $d'_M(v) \le 20$ and so we have performed at most $20n$ turns.
This means that there are at most $\frac{20n^2}{\ln{n}}$ edges in $B$.  This implies that there are fewer than 
$\frac{5n}{\sqrt{\ln{n}}}$ paths of ${\cal F}_{i^*}$ both of whose endpoints have degree at most  1 in $M$, as otherwise we would have added an edge 
joining endpoints of two such paths rather than moving to Stage 2.  It also implies that there are at most $\frac{20n}{\sqrt{\ln{n}}}$ 
troublesome vertices, and hence $|S_{i^*}|  \le |S_0|+\frac{420n}{\sqrt{\ln{n}}} \le  \lceil \frac{2420n}{\sqrt{\ln{n}}} \rceil $.
We note further that for every vertex $v$ of $V-S_{i^*}$, we have $d_M(v) \le 2$.  Furthermore, if 
$v$ has a neighbour in $S_{i^*}$, then $v$ is the endpoint of a path of ${\cal F}_i$ and this neighbour is either troublesome, or is one of the 20 outneighbours of some troublesome vertex.
Furthermore, each vertex of $S_i$ is incident to at most two such edges. It follows that there are at most  $22|S_{i^*}|+|V-S_{i^*}|=n+O(\frac{n}{\sqrt{\ln{n}}})$  turns in the first stage and at most $\frac{420n}{\sqrt{\ln{n}}}$ paths in ${\cal F}_{i^*}$
that have an endpoint of degree 2 in $M$.  Thus, $|{\cal F}_{i^*}| \le \frac{425n}{\sqrt{\ln{n}}}$. 

Having obtained a bound of $n+o(n)$ on the number of turns, we can further strengthen our bounds on  $|S_{i^*}|$ and ${\cal F}_{i^*}$.
There are fewer than $\frac{2n}{\sqrt{\ln{n}}}$ troublesome vertices, so  $|S_{i^*}|  \le |S_0|+\frac{44n}{\sqrt{\ln{n}}} \le  \lceil \frac{2044n}{\sqrt{\ln{n}}} \rceil $.
Further, there are at most $\frac{2n}{\sqrt{\ln{n}}}$ paths in ${\cal F}_{i^*}$ that have an endpoint of degree at most 1, 
and so  $|{\cal F}_{i^*}| \le \frac{90n}{\sqrt{\ln{n}}}$. 

We now consider Phase 2.  Aside from those steps in which we make a cycle exist on $V(P_i)$ 
when no such cycle existed previously, there are at most $20n$ turns taken.  After having performed one such turn, we cannot perform another until $V(P_i)$ has grown bigger. Thus, we also perform at most $n$ such turns and in total at most $21n$ turns. As in the first phase this shows that there are $O(\frac{n}{\sqrt{\ln{n}}})$ troublesome vertices. 
Hence in total we perform at most $2n+o(n)$ turns in the two stages and there are at most $\frac{2n}{\sqrt{\ln{n}}}$ troublesome vertices. 
Thus, as in the first stage, we obtain that for every $i$, $|S_i|  \le |S_0|+\frac{44n}{\sqrt{\ln{n}}} \le  \lceil \frac{2044n}{\sqrt{\ln{n}}} \rceil $.
and  $|{\cal F}_i| \le \frac{90n}{\sqrt{\ln{n}}}$. 

We note that throughout the process, every interior vertex $u$  of a path $Q$ of ${\cal F}_i$ 
 has $d_M(u) \le 2$. So,  if $u$ is an interior vertex  of $P_i$ then  all of its neighbours on $Q$ 
 must be on $P_i$. On the other hand  if $u$ is an endpoint of $P_i$ then again all its neighbours must be on $P_i$, 
  or $P_i$ would not be maximal. 
 Thus, we see that throughout the process, if the interior of a path  $Q$ of ${\cal F}_i$ 
 intersects $P_i$ then  all of $Q$  is contained in $P_i$. Hence when we first add a vertex of the interior of $Q$  to $P_i$ we add all of the interior. 
 It follows that  the number of j between $i^*$ and $i$ for which $|P_{j-1}|<|P_j|$ is at most $|S_i|+3|{\cal F}_i|$.
 and hence we perform at most $|S_i|+3|{\cal F}_i|$ turns where we create a cycle on $V(P_i)$. 
 Hence,  throughout the two stages there are at most $n+23|S_i|+47|{\cal F}_i|$ turns.  The result follows.
\end{proof}

\section{Verifying Maker's Strategy}
\label{sec:thedetails}

It remains to show that Maker can carry out the strategies we have set out and that when the game finishes, she has created a Hamilton cycle.  

\begin{theorem}\label{t:choosing}
At each step of implementing Phase 1 then Phase 2, Maker is able to choose the edge she desires.
\end{theorem}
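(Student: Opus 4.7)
My plan is to verify Maker's strategy branch by branch. Branches that merely select an already-existing object (Phase 1 Case 1(2a), Phase 2 Case 1(2b)(i)) need no check, since the strategy's alternative branch is simply to terminate the phase. The moves requiring verification split into two families: family (A), which asks for a $w^*\in S_0$ with $vw^*\notin B$ (the Case 1(1) moves in both phases and Phase 2 Case 1(2a)), and family (B), Case 2 in both phases, which asks for a $w\in V-S_{i-1}$ with $vw\notin M\cup B$.

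Family (A) is immediate. Because $v$ is selected only when $d'_M(v)<10$ and the prevailing Case 1 hypothesis says every troublesome vertex has $d^+_M\ge 10$, $v$ cannot be troublesome (since $d^+_M(v)\le d'_M(v)<10$), and hence $d_B(v)\le n/\sqrt{\ln n}$. Because $|S_0|=\lceil 2000n/\sqrt{\ln n}\rceil\gg n/\sqrt{\ln n}$, the set $\{w\in S_0 : vw\notin B\}$ is large; the uniform random step plays no role in existence but will matter later when bounding the Maker in-degree at vertices of $S_0$.

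Family (B) requires the inequality $d_B(v)+d_M(v)+|S_{i-1}|<n$ for the selected troublesome max-danger vertex. The bound $|S_{i-1}|=O(n/\sqrt{\ln n})$ is imported from the length theorem via bootstrapping: its proof's first-pass bounds ($|S_i|\le\lceil 2420n/\sqrt{\ln n}\rceil$) are already strong enough to verify Case 2 during Phase 1, and once Phase 1 is verified the refined bounds apply throughout Phase 2. For $d_M(v)=d'_M(v)+(\text{in-degree})$, the out-degree is at most $20$ by the strategy, and the in-degree is $O(\ln n)$ with positive probability by a standard concentration argument on the uniformly random destinations in $S_0$ (at most $O(n/\sqrt{\ln n})$ such edges are tossed into an $S_0$ of matching size over the whole game; every other type of in-edge contributes at most $O(1)$). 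The substantive task is to bound $d_B(v)$. I plan to establish the invariant
\[
\max\{\partial(u):u\text{ troublesome with }d^+_M(u)<10\}=O\!\left(\frac{n}{\sqrt{\ln n}}\right)
\]
by induction on turns. The key observation is that Maker's max-danger rule pairs her $-b$ reduction against Breaker's at-most-$b$ per-candidate addition on each turn, so that the maximum candidate-danger cannot drift far from its entry value $n/\sqrt{\ln n}+O(b)$; moreover, because each new candidate costs Breaker at least $n/\sqrt{\ln n}$ of his edge budget, only $O(n/\sqrt{\ln n})$ candidates ever arise, and this caps the slow drift produced when Breaker spreads attention across tied candidates. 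Given the invariant, $d_B(v)=\partial(v)+b\,d^+_M(v)=O(n/\sqrt{\ln n})+10b=o(n)$, so all three summands in $d_B(v)+d_M(v)+|S_{i-1}|$ are $o(n)$ and the required $w$ exists.

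The danger invariant is the main obstacle, since the bookkeeping has to simultaneously track vertex entry, ties at the maximum (where one Maker response lowers only one of several tied vertices and the amortization must borrow against future turns), and retirements at $d^+_M=10$. Every other step reduces to elementary counting or a textbook concentration argument.
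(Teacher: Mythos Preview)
Your handling of family (A) is correct and matches the paper.  The problem is family (B): the danger invariant you propose,
\[
\max\{\partial(u):u\text{ troublesome, }d^+_M(u)<10\}=O\!\left(\frac{n}{\sqrt{\ln n}}\right),
\]
is simply false, and the heuristic you give for it does not survive scrutiny.

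The max-danger rule does \emph{not} pin the maximum near its entry value.  Breaker may place all $b$ of his edges on a candidate that is not currently the maximum, so Maker's $-b$ and Breaker's $+b$ land on different vertices and fail to cancel.  Iterating the standard harmonic-spreading strategy across $|A|$ simultaneous candidates drives the top danger up by roughly $b\sum_{r=1}^{|A|}1/r\approx b\ln|A|$; with $|A|=\Theta(n/\sqrt{\ln n})$ candidates and $b\approx n/\ln n$ this drift is $\Theta(n)$, not $O(n/\sqrt{\ln n})$.  Your observation that only $O(n/\sqrt{\ln n})$ candidates ever arise is correct but does not cap the drift at that order, because the drift scales with $\ln|A|\approx\ln n$, not with $|A|$.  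Consequently the best one can hope for is $d_B(v)<n-\Theta(n/\sqrt{\ln n})$, and even that requires the full bias hypothesis $b\le n/\ln n - Cn/(\ln n)^{3/2}$ with $C$ taken large --- a hypothesis your argument never invokes, which is itself a red flag.  With the true bound, $d_B(v)$ may be $n-o(n)$, so the margin against $|S_{i-1}|\approx 2000n/\sqrt{\ln n}$ is razor-thin rather than the comfortable $o(n)$ your chain $d_B(v)=O(n/\sqrt{\ln n})+10b$ suggests.

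The fix is exactly the paper's Lemma~\ref{l:lowdegreelemma}: replace the naive induction by the Beck--Krivelevich potential argument, tracking the average of a corrected danger over the set $A_j$ of future-selected candidates and bounding its per-turn increase by $b/|A_j|+O(1)$.  Summing over the shrinking $A_j$ produces the harmonic term $bH_{|A_1|}\le b\ln n$, and the $-Cn/(\ln n)^{3/2}$ in the bias is precisely what makes $b\ln n\le n-\Theta(n/\sqrt{\ln n})$.
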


\begin{theorem}\label{t:winning}
At the end of implementing Phase 1 then Phase 2, $M$ is a Hamiltonian subgraph of $G$.
\end{theorem}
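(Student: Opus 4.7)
The plan is to show that when Phase 2 terminates via case 1(2)(b)(ii), $M$ both contains a cycle on $V(P_i)$ and satisfies $V(P_i)=V$, and hence contains a Hamilton cycle. Throughout I would condition on the high-probability event, obtained in the spirit of \cite{Kri11} from the uniform choices in Case 1(1) and Case 1(2)(a), that the random subgraph of $M$ induced on $S_0$ satisfies a strong expansion property: there is a constant $\alpha > 0$ such that for every $X \subseteq S_0$ with $|X| \le \alpha |S_0|$, the number of $S_0$-vertices outside $X$ adjacent to $X$ in $M$ is at least $2|X|$. This event has positive probability once $|S_0|$ and the outdegree threshold $10$ are fixed appropriately, and I would use it as if deterministic; since the game cannot be a draw, a winning random strategy implies a deterministic winning strategy.

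The first step is to rule out the ``dead-end'' branch of case 1(2)(b)(ii): that there is no path on $V(P_i)$ with both endpoints in $S_0$ joined by an edge outside $E_B$. By the preceding theorem the game lasts $n + o(n)$ turns, so $|E_B| \le b(n+o(n)) \le (1+o(1))\frac{n^2}{\ln n}$. A Krivelevich-style rotation--extension argument applied to $P_i$ using the $S_0$-expander should produce at least $(\alpha|S_0|)^2 = \Theta(n^2/\ln n)$ distinct pairs $\{u,v\} \subseteq S_0 \cap V(P_i)$ such that some path on $V(P_i)$ in $M$ has endpoints $u$ and $v$. Taking the constant $2000$ in $|S_0| = \lceil 2000 n/\sqrt{\ln n}\rceil$ large enough forces this count to strictly exceed $|E_B|$, so at least one such pair satisfies $uv \notin E_B$, contradicting the termination condition. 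Hence $M$ already contains a cycle on $V(P_i)$.

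The second step is to conclude $V(P_i)=V$. Suppose for contradiction that some $x \notin V(P_i)$ exists. Since $V(P_i) \supseteq V(P_{i^*})$ contains a vertex of $S_{i^*}$, and the $S_0$-expansion forces $S_0$ to lie in a single $M$-component together with $V(P_i)$, a short case analysis (treating whether $x \in S_0$ or whether $x$ lies on a path of ${\cal F}_i$ whose endpoints are attached to $S_0$ through the Phase-2 outdegree-$10$ condition) yields an $M$-edge from some $y \in V(P_i)$ to some $z \notin V(P_i)$. The cycle on $V(P_i)$ then provides a Hamilton path on $V(P_i)$ starting at $y$; extending it along $yz$ gives a path in $M$ that contains $V(P_{i-1})$ and is strictly longer than $P_i$, contradicting the maximality in the definition of $P_i$. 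Therefore $V(P_i) = V$ and $M$ contains a Hamilton cycle.

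The main obstacle I anticipate is step~1: Krivelevich's original rotation--extension argument delivers pairs of endpoints anywhere in $V(P_i)$, whereas here I specifically need $\Theta(n^2/\ln n)$ pairs inside the small subset $S_0 \cap V(P_i)$. Formulating and propagating the expansion on $S_0$ alone, and verifying that the rotation sequences can be driven to terminate at $S_0$-vertices, is where the quantitative bookkeeping will be most delicate; this is also precisely what ties the bias bound $b \le n/\ln n - Cn/(\ln n)^{3/2}$ to the choice $|S_0| = \Theta(n/\sqrt{\ln n})$.
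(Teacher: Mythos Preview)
Your proposal is correct and follows essentially the same route as the paper. The paper's proof of this theorem is only three sentences: it invokes Lemma~\ref{l:connectivity} (that $S_i$ is connected in $M$) together with the observation that by the time Case~1(2)(b) is reached every endpoint of a path in $\mathcal{F}_i$ already has out-degree $\ge 10$ into $S_0\subseteq S_i$, so $M$ is connected and hence a cycle on $V(P_i)$ forces either $V(P_i)=V$ or a strict extension of $P_i$. That is exactly your Step~2. Your Step~1 is the content the paper offloads to Lemma~\ref{l:secondstagelemma} and invokes in the proof of Theorem~\ref{t:choosing}, not here.

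The one technical adjustment you should make---and you correctly flag this as the delicate point---is that the paper formulates both its expansion lemma (Lemma~\ref{l:expansion}) and its connectivity lemma on the larger set $S'_i=S_i\cup\{\text{endpoints of paths in }\mathcal{F}_i\}$, not on $S_0$. The reason is that the random out-edges are drawn \emph{from} every vertex of $S'_i$ \emph{into} $S_0$, and the limited-rotation step (pivot at an out-neighbour $x\in S_0$, new endpoint $y$ a $P$-neighbour of $x$) produces endpoints lying in $S'_i$, not necessarily in $S_0$. Formulating expansion only on the induced subgraph $M[S_0]$, as you propose, would stall the rotation once the current endpoint leaves $S_0$; widening the domain of the expansion hypothesis from $S_0$ to $S'_i$ is exactly what the paper does, and with that change your argument goes through as written.
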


In order to prove these theorems, we require the following three lemmas, the details of which follow later.

\begin{lemma}
\label{l:secondstagelemma} 
If Maker implements Phase 1 and then Phase 2, then with probability $1-o(1)$, 
for every $i \ge i^*$ there are more than $\frac{20n^2}{\sqrt{\ln{n}}}$ pairs of vertices that can be the endpoints of a path on $V(P_i)$. 
\end{lemma}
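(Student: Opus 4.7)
My plan is to decompose the proof into a probabilistic step (establishing expansion of the random subgraph Maker builds in Case 1.1) and a deterministic step (invoking P\'osa's rotation--extension technique). Let $R \subseteq M$ denote the subgraph of edges added by Maker in Case 1.1 of both phases; since Case 1.1 is the only place Maker makes a random choice, all randomness is captured by $R$. The key preliminary observation is that Case 1.1 never fires on a troublesome $v$ (as remarked just after the Phase 1 strategy), so $d_B(v) \le n/\sqrt{\ln{n}} = o(|S_0|)$, and therefore the uniform choice of $w^* \in \{w \in S_0 : vw \notin B\}$ is within total variation $o(1)$ of uniform on $S_0$. Each non-troublesome vertex of $S$ contributes exactly ten edges to $R$, and $(1-o(1))|S|$ of the $S$-vertices are non-troublesome, so $|E(R)| = \Theta(|S_0|)$.

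The first step is to show by a standard random-graph union bound that, with probability $1-o(1)$, $R$ satisfies the expansion property $|N_R(X) \cap (S_0 \setminus X)| \ge 2|X|$ for every $X \subseteq S_0$ with $|X| \le \alpha|S_0|$, where $\alpha > 0$ is a suitably small absolute constant. For fixed $X$ of size $k$, the failure probability is at most $\binom{|S_0|}{2k}\bigl(1 - k/|S_0| + o(1)\bigr)^{10(|S_0|-3k)}$, and summing over $X$ and $k \le \alpha|S_0|$ yields $o(1)$ when $\alpha$ is small enough. The second step is to promote this $R$-expansion on $S_0$ to a $(k,2)$-expansion property of $M[V(P_i)]$ for $k = \Omega(|V(P_i)|)$, by using the path-edge structure of the absorbed ${\cal F}_i$-paths in $V(P_i)$ to handle subsets $X$ disjoint from $S_0$. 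Once this expansion is in hand, P\'osa's lemma implies that from any longest path of $M[V(P_i)]$, rotations produce at least $k$ distinct alternative endpoints on each side, and so at least $k^2 = \Omega(|V(P_i)|^2)$ endpoint pairs in total; for $|V(P_i)| = \Omega(n)$ this far exceeds $20 n^2 / \sqrt{\ln{n}}$.

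The principal obstacle is the structural bound $|V(P_i)| = \Omega(n)$, which must hold from the very first step $i = i^*$ of Phase 2. I would argue this as follows. The $R$-expansion makes $M[S]$ essentially connected, so by P\'osa's lemma $M[S]$ contains a path on nearly all of $S$. Next, the $\Theta(n/\sqrt{\ln{n}})$ cross edges produced when interior vertices of ${\cal F}$-paths become troublesome and get split off (in Case 2 of Phase 1) attach endpoints of ${\cal F}_{i^*}$-paths to $S$; since $|V-S| = n - o(n)$ is partitioned into $|{\cal F}_{i^*}| = O(n/\sqrt{\ln{n}})$ paths of average length $\Theta(\sqrt{\ln{n}})$, any maximal path in $M$ that chains together a constant fraction of these ${\cal F}_{i^*}$-paths through $S$-expander segments already covers $\Omega(n)$ vertices, and $P_{i^*}$ is at least as long. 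A secondary obstacle is verifying $(k,2)$-expansion of $M[V(P_i)]$ for subsets $X \subseteq V(P_i)$ containing few $S_0$-vertices, where $R$ contributes nothing; this requires showing that such $X$ cannot be concentrated within a few ${\cal F}_i$-paths while avoiding both path-boundary edges and the cross edges attaching ${\cal F}_i$-path endpoints to $S$, which I expect to handle by a careful case analysis using the bound $|{\cal F}_i| = O(n/\sqrt{\ln{n}})$.
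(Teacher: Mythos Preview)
Your decomposition into a probabilistic expansion step followed by P\'osa rotation is the right shape, but the promotion you propose in the second step cannot work as stated. A set $X$ consisting of $k$ consecutive interior vertices of a single $\mathcal{F}_i$-path has $|N_M(X)| = 2$ for every $k$, so $M[V(P_i)]$ simply does not enjoy $(k,2)$-expansion for any $k \ge 2$. This is not a ``secondary obstacle'' amenable to case analysis: it is a direct counterexample to the hypothesis of the standard P\'osa lemma you invoke. No rearrangement of the ${\cal F}_i$-paths or appeal to cross edges rescues this, since cross edges are attached only at path endpoints, not at interior vertices.

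The paper avoids this by never asking for expansion on all of $V(P_i)$. Instead it works only inside $S'_i = S_i \cup \{\text{endpoints of paths in }{\cal F}_i\}$, a set of size $\Theta(n/\sqrt{\ln n})$ on which the random out-edges into $S_0$ do give the needed expansion (this is the expansion lemma). It first argues that some path on $V(P_i)$ has both endpoints in $S'_i$, and then performs \emph{limited} rotations in which the pivot vertex is required to lie in $S_i$; this guarantees that every new endpoint produced stays in $S'_i$. The rotation endpoint set $S$ then satisfies $|N'(S)| < 2|S|$, so the expansion lemma forces $|S| \ge |S'_i|/100$, and repeating from each such endpoint yields $\Theta(|S'_i|^2)$ pairs. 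In particular, your ``principal obstacle'' $|V(P_i)| = \Omega(n)$ is never needed: the count of endpoint pairs is controlled entirely by $|S'_i|$, and the argument works even when $P_{i^*}$ is short. Your sketch for establishing $|V(P_i)| = \Omega(n)$ at $i = i^*$ is in any case unconvincing, since nothing in Phase~1 forces a single $M$-path to absorb a constant fraction of the ${\cal F}_{i^*}$-paths.
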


\begin{lemma}
\label{l:lowdegreelemma} 
For sufficiently large C, if Maker implements Phase 1 and then Phase 2, then throughout the algorithm  every troublesome vertex with 
$d^+_M(v) <10$ satisfies $d_B(v) < n-\frac{3000n}{\sqrt{\ln{n}}}$.
\end{lemma}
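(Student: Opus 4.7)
The proof proceeds by contradiction. Assume there exists a turn at which a vertex $v^*$ has $d^+_M(v^*) < 10$ and $d_B(v^*) \geq n - 3000n/\sqrt{\ln n}$; take the earliest such turn. Then
\[
\partial(v^*) = d_B(v^*) - b\, d^+_M(v^*) \geq n - 3000n/\sqrt{\ln n} - 10b,
\]
and since $b \leq n/\ln n$ forces $10b = o(n/\sqrt{\ln n})$, this is of order $n$, far larger than $n/\sqrt{\ln n}$.

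The plan is to establish that throughout the algorithm $\max_{u \in T} \partial(u) = O(n/\sqrt{\ln n})$, where $T$ denotes the set of troublesome vertices with $d^+_M < 10$. This immediately yields the contradiction, since any $v \in T$ would then satisfy $d_B(v) \leq \partial(v) + 10b = O(n/\sqrt{\ln n}) \ll n - 3000n/\sqrt{\ln n}$. Three observations drive the argument: (a) a vertex entering $T$ has $\partial(v) \leq n/\sqrt{\ln n} + b \leq 2n/\sqrt{\ln n}$, since just before entry $d_B(v) \leq n/\sqrt{\ln n}$ and one Breaker turn can increase $d_B(v)$ by at most $b$; (b) each Breaker turn raises any single vertex's $\partial$ by at most $b$; (c) Maker's Case 2 strategy attacks the maximum-danger vertex in $T$, decreasing its $\partial$ by exactly $b$, and after ten such attacks that vertex leaves $T$.

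I would prove the $O(n/\sqrt{\ln n})$ bound via an amortized argument combining the potential
\[
\Phi = \sum_{v \in T} \max\bigl(0,\, \partial(v) - 2n/\sqrt{\ln n}\bigr)
\]
with a counting bound on high-degree vertices. The potential $\Phi$ grows by at most $2b$ per Breaker turn (each Breaker edge raises at most two $\partial$-excesses by one each), while a Case 2 attack on an above-threshold vertex $v^\circ$ drops $\Phi$ by $\min(b,\, \partial(v^\circ) - 2n/\sqrt{\ln n})$. To bound the total Case 2 attacks on above-threshold vertices, I would use the total-degree bound $\sum_v d_B(v) \leq 2|E_B| \leq 4n^2/\ln n$ that follows from the turn bound of the previous section: only $O(n/(\alpha \sqrt{\ln n}))$ vertices ever have $d_B \geq \alpha n/\sqrt{\ln n}$, and each such vertex is attacked in Case 2 at most $10$ times before leaving $T$. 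The main obstacle is that Maker's arbitrary tie-breaking among equally-maximal vertices may fail to lower the running maximum in a single turn; I would handle this by noting that tied configurations necessarily involve multiple high-$d_B$ vertices, which are scarce, so the configuration cannot persist long without the cumulative Case 2 attacks exhausting the supply. Assembling these ingredients yields $\max_{v \in T} \partial(v) = O(n/\sqrt{\ln n})$ throughout, which completes the contradiction.
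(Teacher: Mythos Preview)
Your proposal has a fundamental gap: the target bound $\max_{u\in T}\partial(u)=O(n/\sqrt{\ln n})$ is simply false, and nothing in your outline would establish it.  Consider your potential $\Phi=\sum_{v\in T}\max(0,\partial(v)-2n/\sqrt{\ln n})$.  A Breaker turn can raise $\Phi$ by as much as $2b$ (each of the $b$ edges touches two vertices), while a Maker Case~2 turn lowers $\Phi$ by at most $b$.  The net drift is $+b$ per round, so $\Phi$ is not controlled; your appeal to ``scarcity of high-degree vertices'' does not repair this, since Breaker can spread his $b$ edges over many vertices that are already above your $2n/\sqrt{\ln n}$ threshold.  In fact the danger \emph{can} climb to $n-\Theta(n/\sqrt{\ln n})$: that is precisely why the lemma's conclusion is $d_B(v)<n-3000n/\sqrt{\ln n}$ rather than $d_B(v)=O(n/\sqrt{\ln n})$.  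A further red flag is that your argument never uses the hypothesis $b\le n/\ln n - Cn/(\ln n)^{3/2}$ with $C$ large; the phrase ``for sufficiently large $C$'' in the statement is essential, and any correct proof must exploit that gap.

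The paper's argument is the standard Beck--Chv\'atal--Erd\H{o}s ``box game'' potential, adapted from Krivelevich.  One looks at the maximal suffix of $k$ consecutive Case~2 turns ending at the offending turn, lets $A_j$ be the set of vertices attacked from turn $j$ onward, and tracks the \emph{average} modified danger $p(j)=|A_j|^{-1}\sum_{x\in A_j}d^{j-1}(x)$.  Because Maker always attacks the maximum-danger vertex, removing $a_j$ from $A_j$ costs at most $b/|A_j|+2$ in the average; summing over the at most $|A_1|\le k$ removals gives $p(k)\le p(1)+2k+b\sum_{r=1}^{k}1/r\le O(n/\sqrt{\ln n})+b\ln n$.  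The crucial step is that $b\ln n\le n-Cn/\sqrt{\ln n}$ by the bias hypothesis, so $p(k)<n-3001n/\sqrt{\ln n}$ once $C$ is large enough, contradicting $p(k)=\partial(v^*)\ge n-3001n/\sqrt{\ln n}$.  The harmonic sum and the $Cn/(\ln n)^{3/2}$ gap are exactly what your approach is missing.
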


\begin{lemma}
\label{l:connectivity} 
If Maker implements Phase 1 and then Phase 2, then with  probability $1-o(1)$  for every $i\ge i^*$, $S_i$ is connected.
\end{lemma}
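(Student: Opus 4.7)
The plan is to reduce connectivity of the Maker-subgraph $M[S_i]$ to the connectivity of a random $10$-out graph on $S_0$, supplemented by deterministic one-step attachments through the Case 2 edges.

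First I would classify the Maker-edges with both endpoints inside $S_i$. The only such edges come from Case 1(1) (from $v\in S_{i-1}$ to a uniform random $w^*\in\{w\in S_0:vw\notin B\}$) and Case 2 (from a troublesome $v\in S_{i-1}$ to the freshly added child $w$), since the Case 1(2)(a) edges live entirely in $V-S_{i-1}$. By the strategy every non-troublesome $v\in S_i$ eventually accumulates at least ten Case 1(1) outedges; every troublesome $v$ accumulates ten Case 2 edges to fresh children in $V\setminus S_0$; and every such child, being non-troublesome at the moment of addition, eventually gets its own ten Case 1(1) outedges.

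Second, I would exploit the time bounds of Section \ref{sec:time}. There are at most $2n/\sqrt{\ln n}\le 10^{-3}|S_0|$ troublesome vertices in total, and at every Case 1(1) step the vertex $v$ chosen is non-troublesome (as the strategy explicitly notes), so $d_B(v)\le n/\sqrt{\ln n}$ and the sampling set $\{w\in S_0:vw\notin B\}$ has size at least $(1-10^{-3})|S_0|$. Hence each Case 1(1) random choice is within a $1+o(1)$ factor of uniform on $S_0$. Restricting attention to the non-troublesome vertices of $S_0$ thus gives a random $10$-out graph on nearly all of $S_0$; a standard Fenner--Frieze-type calculation bounds the expected number of isolated subsets $A\subseteq S_0$ of size $a$ by
\[
\binom{|S_0|}{a}\Bigl(\tfrac{a+o(|S_0|)}{|S_0|}\Bigr)^{10a},
\]
and summing over $1\le a\le |S_0|/2$ yields $o(1)$; so this ``core'' is connected w.h.p.

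Finally, every remaining vertex of $S_i$ attaches to the core w.h.p. A child $w$ of a troublesome vertex, itself non-troublesome when added, accumulates ten near-uniform Case 1(1) edges into $S_0$ and therefore hits a non-troublesome (core) vertex w.h.p.; a troublesome vertex is linked by its Case 2 edges to ten such children and so reaches the core through any one of them; a troublesome vertex originally in $S_0$ is handled identically via its children. Union-bounding over the $O(n/\sqrt{\ln n})$ troublesome vertices and their children and over the $O(n)$ indices $i\ge i^*$ keeps the total failure probability $o(1)$. The main obstacle I expect is the adaptive coupling between Maker's random choices and Breaker's responses: the sampling set depends on $B$, which itself depends on Breaker's replies to Maker's earlier random moves, so the Case 1(1) choices are not genuinely independent. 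I would handle this by conditioning on a worst-case deterministic Breaker strategy and using the uniform estimate that the sampling set has size $\ge (1-o(1))|S_0|$ at every Case 1(1) step regardless of history; this keeps each conditional distribution within $1+o(1)$ of uniform on $S_0$ and preserves the Fenner--Frieze bounds with only constant-factor losses in the exponents, which is ample slack.
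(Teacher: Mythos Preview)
Your approach is sound but takes a different and less economical route than the paper. The paper first establishes a stronger expansion property (Lemma~\ref{l:expansion}): with probability $1-o(1)$, for every $i\ge i^*$ and every $S\subseteq S'_i$ with $|S|\le |S'_i|/2$ the out-neighbourhood $N'(S)$ inside $S_i$ is nonempty (and $|N'(S)|>2|S|$ whenever $|S|\le |S'_i|/100$). Connectivity of $S_i$ is then a two-line corollary: any component $S$ of $M[S_i]$ has $N'(S)=\emptyset$, hence $|S|>|S'_i|/2\ge|S_i|/2$, so there is only one component. Your Fenner--Frieze union bound over cuts is essentially the $k=0$ special case of the same first-moment computation the paper carries out to prove Lemma~\ref{l:expansion}. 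Since the full expansion property must be proved anyway for Lemma~\ref{l:secondstagelemma} (the P\'osa rotation argument needs $|N'(S)|>2|S|$), the paper's organisation obtains both lemmas from a single calculation, whereas your plan proves connectivity directly and still leaves the expansion calculation to be redone separately.

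One small correction: a non-troublesome vertex of $S_i$ is only guaranteed at least \emph{eight} near-uniform outedges into $S_0$, not ten, since up to two of its outedges may be Phase~1 path-joining moves made before it entered $S_i$ (this is exactly the ``at most two edges out of $v$ before $v$ is added to some $S_i$'' caveat in the paper's proof of Lemma~\ref{l:expansion}); eight is still ample for your Fenner--Frieze bound. Also, your final union bound ``over the $O(n)$ indices $i\ge i^*$'' needs a word of care: at an intermediate $i$ a freshly added troublesome vertex or child may not yet have accumulated its random outedges, so the per-$i$ failure probability is not uniformly small. The paper sidesteps this because the lemma is only ever invoked at turns where Case~1(2)(b) is active, i.e.\ when every vertex of $S_{i-1}$ already has $d'_M\ge 10$; you should restrict to such $i$ explicitly.
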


Armed with these technical results, the proofs of our theorems easily follow.

\begin{proof}[Theorem \ref{t:choosing}]
We first show that, in each of Phase 1 and 2, any choice that needs to be made in Case 1 can be made.  Recall that  $|S_0|=\lceil \frac{2000n}{\sqrt{\ln{n}}} \rceil $. 
Thus if $v$ is a nontroublesome vertex with $d'_M(v) \le 10$, then the $w$ in $S_0$,  for which can choose $vw$ as an edge, form the 
overwhelming majority of $S_0$.  Lemma \ref{l:secondstagelemma} shows that Maker can always find $uv$-available paths in ${\cal F}_{i-1}$ when need be. 

To handle Case 2 in each of Phase 1 and Phase 2, we apply Lemma \ref{l:lowdegreelemma} (with $C$ taken sufficiently large) together with the observation that for every $i\le i^*$, $|S_i| \le  \lceil \frac{2044n}{\sqrt{\ln{n}}} \rceil $.
\end{proof}

\begin{proof}[Theorem \ref{t:winning}]
Since $P_i$ contains a vertex of $S_{i^*}$ and hence of $S_i$ for every $i \ge i^*$,  this Lemma \ref{l:connectivity} implies that the component of $M$ containing $P_i$ contains all of $S_i$.
Furthermore, in any turn in which Maker adds an edge to make $V(P_i)$ into a cycle, every endpoint of a path of $F_i$ has 10 neighbours in $S_i$ and hence  this implies that $M$ is connected. Thus Maker does indeed extend $P_i$ when she adds an edge that creates a cycle on $V(P_i)$ unless this is the last turn and she has created a Hamilton cycle.
\end{proof}

It remains to prove Lemmas \ref{l:secondstagelemma},  \ref{l:lowdegreelemma} and  \ref{l:connectivity}.  The techniques used closely mirror those used in a companion paper to this one on the biased perfect matching game \cite{B+20}.
 
 \subsection{The Proof of Lemma \ref{l:lowdegreelemma}}
 
 The proof that follows is a modification of a standard argument used by Krivelevich \cite{Kri11}. 
 
\begin{proof}Assume for the contrary that in some turn $f$, there is a  troublesome vertex $v$ with $d^+_M(v) <10$ and  $d_B(v) > n-\frac{3000n}{\sqrt{\ln{n}}}$.  We consider the last turn $s$ before this turn in which 
an edge was chosen from a non-troublesome vertex (such a turn must exist as at the start of the game there are no troublesome vertices) 
 and the suffix of $k$ turns after  it until turn $f$ . So,  in every turn between $s$ and $f$,  we 
chose an edge out of a troublesome vertex.
For each $i$ between $1$ and $k$,  we let $a_i$ be the vertex from which Maker chose an edge in the $i^{th}$ turn of this suffix 
and let $A_i$  be the set of $ \{a_j|  i \le j \le k\}$. Note that this is a set not a multi-set even though $a_j$ may equal $a_k$ for $i\neq k$. Note further  that  $A_k=\{v\}$.

Since, we chose an edge out of a troublesome vertex for each of these $k$ turns, we have $k \le \frac{44n}{\sqrt{\ln{n}}}$. 
For each $j$  between $0$ and $k$, and $x$ in $A_{j+1}$ we let         $d^j(x)= \deg_B^*(a_i) - |\{x| x \in A_{j+1}, a_ix \in E_B\}|-bd^+_M(a_i)$, after $j$ turns of the sequence.
        
 Before turn  $s$  there were no troublesome vertices and after it each vertex degree is increased by at most $b$.  So for each $x \in A_1$, we have:  $ d^0(x) \le  \frac{n}{\sqrt{\ln n}}+\frac{n}{\ln n}$. 

        We define the potential function $p(j)$, as follows:

$$p(j) := \frac{1}{|A_j|}\sum_{x \in A_j}d^{j-1}(x)$$

Note that, we are assuming that $p(k) = d^{k-1}(a_k) \ge d_B(a_k)-10b > n-\frac{3001n}{\sqrt{\ln{n}}}$. On the other hand  $$ p(1)  = \frac{1}{|A_1|}\sum_{x \in A_1}d^0 (x) \le \frac{1}{|A_o|} \times |A_0| \times \left(\frac{n}{\sqrt{\ln n}}+\frac{n}{\ln n}\right)=\frac{n}{\sqrt{\ln n}}+\frac{n}{\ln n}.$$

We derive a contradiction by bounding the increase in the potential function in each step. 

If $A_{j+1}=A_j$ then  since  the  $j^{th}$ turn increases $bd_M(a_j)$ by b and in any turn Breaker can  pick at most $b$ edges  with exactly one endpoint in $A_j$, we have:

$$ \sum_{x \in A_{j+1}}d^j(x) \le \sum_{x \in A_j}d^{j-1}(x)$$

and so $p(j+1) \le p(j)$. 

On the other hand, if $A_{j+1}=A_j-a_j$ then  $\sum_{x \in A_{j+1}}d^j(x)-\sum_{x \in A_{j+1}}d^{j-1}(x)$ is the sum of the number of 
edges Breaker chose in the $j^{th}$ turn with exactly one end in $A_{j+1}$ 
and  the number of edges he has chosen between $a_j$ and   $A_{j+1}$  by the end of the 
$j^{th}$ turn. The first of these is at most $b$ and the second is  is at most $|A_{j+1}|$. 

We obtain: 
$$(*) \sum_{x \in A_{j+1}}d^j(x) \le \sum_{x \in A_j+1}d^{j-1}(x) + b+|A_{j+1}|.$$

By  our definition of $d^j_i$ and choice of $a_i$ to maximize the danger ,
at the start of the $j^{th}$ turn,
$$\forall  x \in A_j,  d^{j-1}(x) \le  \partial(x) \le \partial(a_j) \le d^{j-1}(a_j)+ |A_{j+1}|$$
 
 Summing up and dividing by $|A_j|$, we obtain  $$d^{j-1}(a_j) \ge \frac{\sum_{x \in A_{j}}d^{j-1}(x)}{|A_j|} -{|A_{j+1}|}.$$
 
 Thus, 
 
 $$\sum_{x \in A_{j+1}}d^{j-1} (x) \le \frac{|A_{j+1}|}{|A_j|}\sum_{x \in A_j}d^j(x)+|A_{j+1}|.$$
 
Combining this with $(*)$ yields: 

$$\frac{\sum_{x \in A_{j+1}}d^{j}(x)}{|A_{j+1}|} \le \frac{\sum_{x \in A_j }d^{j-1}(x)}{|A_{j}|}+\frac{b}{|A_j|}+2.$$

Summing over the iterations where the potential can increase we obtain: 

$$p(k) \le p(1)+ 2|A_1| +b\sum_{r=1}^{|A_1|} \frac{1}{r}  \le p(1)+ 2k +b\sum_{r=1}^{k} \frac{1}{r} \le
 \frac{91n}{\sqrt{\ln{n}}}+\frac{n}{\ln{n}}+b(\ln{n})$$

Since $b<\frac{n}{\ln{n}}-\frac{Cn}{(\ln{n})^{3/2}}$ and  we are free to choose $C$ as large a constant as we like, 
the desired result follows.
\end{proof}

 \subsection{The Proofs of Lemmas \ref{l:secondstagelemma} and Lemma \ref{l:connectivity}}
 
Before proceding, we give a necessary definition.  
  \begin{definition}
  For $i \ge i^*$, we define $S'_i$ to be the union of $S_i$ and the endpoints of the 
  elements of ${\cal F}_i$.  For any subset $S$ of $S'_i$ we define
 $$N'(S)=\{v \in S_i \setminus S \,\mid\, \textup{$\exists$ $u \in S$ s.t.~the directed edge uv was chosen by Maker}\}.$$
  \end{definition}
  
In order to prove both lemmas, we first prove the following result, whose proof is again similar to one given by Krivelevich \cite{Kri11}.
  
  \begin{lemma}
  \label{l:expansion}
  If Maker implements Phase 1 then Phase 2, then with probability $1-o(1)$, 
  for every $i \ge i^*$ and $S \subseteq S'_i$, if $|S| \le \frac{|S'_i|}{2}$ then $N'(S)$ is nonempty
  while if  $|S| \le \frac{|S'_i|}{100}$,
  then $|N'(S)| >2|S|$. 
  \end{lemma}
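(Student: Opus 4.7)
The plan is to adapt Krivelevich's probabilistic union-bound argument (the one he uses in the $14n$-turn proof), modified to account for the fact that each vertex's outgoing Maker-edges are a mixture of uniformly random edges (from Case~1.1 and Phase~2 Case~1.2(a)) and deterministic Case~2 edges. I would first record two structural observations. Whenever Case~1.1 is invoked at a vertex $v$, $v$ is non-troublesome (since the Case~1 condition forces any troublesome vertex to have $d^+_M(v) \ge 10$, hence $d'_M(v) \ge 10$), so $d_B(v) \le n/\sqrt{\ln n}$ and the random choice is uniform over a subset of $S_0$ of size at least $|S_0| - n/\sqrt{\ln n} \ge 1999 n/\sqrt{\ln n}$. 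Separately, every Case~2 execution targets a fresh $w \in V \setminus S_{t-1}$ which is then added to $S_t$, so the Case~2 out-edges of any fixed set of sources point to pairwise distinct targets in $V \setminus S_0$.

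Now fix $i \ge i^*$ and $S \subseteq S'_i$ with $k := |S|$, and write $r := \sum_{v \in S} d^-_M(v)$ and $d := \sum_{v \in S} d^+_M(v)$. The strategy enforces $r + d \ge 10k$ at each moment the expansion is actually invoked, because Case~1.1 and Phase~2 Case~1.2(a) force $d'_M(v) \ge 10$ for every $v \in S'_i$ before the relevant booster step. I would split on the size of $r$. In the deterministic case $r \le 6k$, we have $d \ge 4k$, and the Case~2 edges leaving $S$ hit $d$ pairwise distinct targets in $V \setminus S_0$, of which at most $|S| = k$ can lie inside $S$; hence $|N'(S)| \ge 3k > 2k$ deterministically, settling both claims simultaneously.

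In the probabilistic case $r \ge 6k+1$, I would apply Krivelevich's argument to the random edges. The bad event ``$|N'(S)| \le 2k$'' implies the existence of a set $T \subseteq S_0 \setminus S$ with $|T| \le 2k$ such that every random out-edge from $S$ lies in $S \cup T$; by the conditional independence of the random choices given history, this has probability at most $\bigl(|S \cup T|/(1999 n/\sqrt{\ln n})\bigr)^r$. Similarly, the event ``$N'(S) = \emptyset$'' forces every Case~2 target from $S$ to lie in $S$, which bounds $d \le k$, hence $r \ge 9k$, and every random out-edge from $S$ must then fall in $S \cap S_0$, an event of probability at most $\bigl(k/(1999 n/\sqrt{\ln n})\bigr)^{9k}$. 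Taking a union bound over the $\binom{|S'_i|}{k}\binom{|S_0|}{2k}$ choices of $(S,T)$, and plugging in the length-analysis bounds $|S'_i|, |S_0| = O(n/\sqrt{\ln n})$, I expect (after cancelling $n/\sqrt{\ln n}$ factors) a per-$k$ failure probability of the form $[c_0 \cdot k^a (n/\sqrt{\ln n})^{-a}]^k$ for a small absolute constant $c_0$ and some $a \in \{3,4\}$; this sums over $k$ to $o(n^{-2})$, so a further union bound over the $O(n)$ values of $i$ gives total failure probability $o(1)$.

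The main obstacle will be calibrating the constants so both halves of the case split go through: the ratio $|S_0|/|S'_i|$ and the exponent $10$ of random edges per vertex must be chosen large enough for the union bound in the probabilistic case to close. The strategy's parameters $|S_0| = \lceil 2000 n/\sqrt{\ln n}\rceil$ and ``$d'_M(v) \ge 10$'' are tailored for exactly this purpose. Without the deterministic half of the argument, Krivelevich's calculation would break down on subsets whose vertices became troublesome early and thus collected few random edges; the deterministic case absorbs precisely these pathological subsets, leaving the probabilistic case with more than enough randomness.
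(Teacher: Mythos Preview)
Your two-case split—settling subsets with many Case~2 out-edges deterministically (via the distinctness of their fresh targets) and handling the remainder with a Krivelevich-style union bound over the random out-edges—is a genuinely different decomposition from the paper's. The paper runs a single union bound with exponent $8|S|$, asserting that every $v\in S$ contributes at least eight uniformly random out-edges into $S_0$; it does not separate out a deterministic regime, and in fact its write-up only justifies the ``eight random edges'' claim for non-troublesome vertices. Your split is therefore a more careful treatment of precisely the point on which the paper's presentation is thin, and your deterministic branch (Case~2 targets are pairwise distinct members of $S_i$, so $|N'(S)|\ge d-k$) is correct.

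There is, however, a concrete quantitative gap in your probabilistic branch. You set $r:=\sum_{v\in S} d^-_M(v)$ and then use $r$ as the exponent in the probability bound $\bigl(|S\cup T|/(1999\,n/\sqrt{\ln n})\bigr)^r$, i.e.\ you treat $r$ as the number of \emph{random} out-edges leaving $S$. But $d^-_M(v)$ counts all out-edges placed while $v$ was non-troublesome, and for a vertex that was once a path endpoint this includes up to two deterministic path-concatenation out-edges from Phase~1 Case~1.2(a). Hence the number of genuinely random out-edges is only guaranteed to be $\ge r-2k$, so your threshold $r\ge 6k+1$ yields only $\ge 4k+1$ random edges. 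A direct check shows that exponent $4k$ does not beat the $\binom{|S'_i|}{k}\binom{|S_0|}{2k}$ union-bound cost when $k$ is near $|S'_i|/100$: the per-$k$ failure term comes out of order $(c_0\,k\sqrt{\ln n}/n)^k$ with $c_0>1$ in that range, not the $(c_0(k\sqrt{\ln n}/n)^a)^k$ with $a\ge 3$ that your sketch anticipates. The repair is routine—either track the true random-edge count (at least $8$ per non-troublesome vertex, giving the cleaner dichotomy ``$\ge 3k/10$ troublesome vertices $\Rightarrow$ enough distinct Case~2 targets; otherwise $\ge 8\cdot\tfrac{7k}{10}$ random edges'') or raise your threshold so that the probabilistic branch guarantees $\ge 6k$ genuinely random edges—but as written the constants do not close.
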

  
\begin{proof}
  
  To calculate an upper bound on the  probability that there is some $S$ such that $N'(S)$ is so small, the conclusion here does not hold, we 
  simply sum an upper bound on  the probability over all choices of $i$, a partition of $S'_i$ into $S$, $A$ and $B$  with appropriate bounds on $|S|$ and $|A|$ of the probability that 
$N'(S)=|A|$.
We let $j=|S|$ and $k=|A|$ and note we can choose the partition by choosing $S \cup A$ and then  a partition of this set into $S$ and $A$.
Thus the number of such partitions is:
    \[\binom{|S_i'|}{j+k}\binom{j+k}{j}\le {\binom{|S'_i|}{j+k}}2^{j+k} \le \left(\frac{|S'_i|}{j+k}\right)^{j+k} (2e)^{j+k}\]
  For each such partition, Maker chooses at most two edges out of $v$ before $v$ is added to some $S_i$.
  From that point on, since $v$ is not troublesome,  Maker chooses eight edges from $v$ to $S_0$ and for each choice  there are most $\frac{n}{\sqrt{\ln{n}}}$ 
  edges from $v$ to $S_0$ that it cannot choose. Hence, the probability it chooses an edge from $v$ to $S+A$ is at most 
  $$\frac{|S|+|A|}{|S_0|-\frac{n}{\sqrt{\ln{n}}}} \le  \frac{j+k}{\frac{9|S'_i|}{10}}=\frac{j+k}{|S'|}\times\frac{10}{9}$$
  
  So the probability that we make choices so that $N'(S) \subseteq  A$  is at most 
  
   $$\left( \frac{j+k}{|S'_i|}\right)^{8j}\left(\frac{10}{9}\right)^{8j}.$$
   
  Thus, for a specific $j$ and $k$, the probability that there is some set $S$ of $j$ non-troublesome vertices of $S'_i$ such that 
  $N(S')=k$ is bounded above by: 
  
  $$\left( \frac{j+k}{|S'_i|}\right)^{7j-k}\left(\frac{10}{9}\right)^{8j}(2e)^{j+k}.$$
  
  Now, if $j<\frac{|S'_i|}{100}$ and $k<2j$ then this is less than 
  
  $$\left( \frac{j+k}{|S'_i|}\right)^{j} \left(\frac{1}{25}\right)^{3j}\left(\frac{200e}{81}\right)^{4j} \le \left(\frac{j+k}{|S'_i|}\right)^j\left(\frac{1}{5}\right)^{j}$$
  Assume that $x=(j+k)$ is fixed, then since $k\leq 2j$, we can conclude that $x\geq j\geq \frac{x}{3}$. Considering just one vertex of $S$ we know that $i=|S \cup N(S)|\geq 9$ so we must also have $j\geq 3$. So by summing over $j$ we get:
  \[\sum_{j=\max\left\{3,\ceil{\frac{x}{3}}\right\}}^{x} \power{\frac{x}{5|S'_i|}}{j}=\power{\frac{x}{5|S'_i|}}{\max\left\{3,\ceil{\frac{x}{3}}\right\}}\cdot O(1)\]
  
  Noting that $\power{\frac{x}{5|S_i'|}}{\max\left\{3,\ceil{\frac{x}{3}}\right\}}$
 is less than $\power{\frac{1}{|S_i'|}}{2}$ when $x\leq |S_i'|^{\frac{1}{3}}$ and less than $\power{\frac{1}{25}}{|S_i'|^{\frac{1}{3}}}$ otherwise since $j+k\leq 4j\leq \frac{|S_i'|}{25}$ by considering whether $x$ is greater or less than $|S_i'|^{\frac{1}{3}}$, by summing over $x$ we get:
  
  \[\sum_{x=9}^{\frac{|S_i'|}{25}} \power{\frac{x}{|S_i'|}}{\ceil{\frac{x}{3}}}\cdot O(1)\leq  \left(|S_i'|^{\frac{1}{3}}\cdot \power{\frac{1}{|S_i'|}}{3}+\frac{|S_i'|}{25}\cdot \power{\frac{1}{25}}{|S_i'|^{\frac{1}{3}}}\right)\cdot O(1)=O\left(\frac{1}{|S_i'|^{\frac{3}{2}}}\right)\]
  Finally, by noting that $|S_i'|\geq |S_0|=\Omega\left(\frac{n}{\sqrt{\ln{n}}}\right)$, we know that summing this over $i$ gives us $o(1)$ since $i=O(n)$.
  
  We consider next $\frac{|S'_i|}{100} \le  j \le \frac{|S'_i|}{2}$ and $k=0$. 
   As noted above, for any such choice of  $j$ and $k$, the probability that there is some set $S$ of $j$ nontroublesome vertices of $S'_i$ such that 
  $N(S')=k$ is bounded above by: 
  
   \begin{align*}
    &\power{\frac{j+k}{|S_i'|}}{7j-k} \power{\frac{10}{9}}{8j} (2e)^{j+k}\\
    = &\power{\power{\frac{j}{|S_i'|}}{7} \power{\frac{10}{9}}{8} 2e}{j}\\
   \le &\power{\frac{20e}{9}\cdot \power{\frac{5}{9}}{7}}{j}\\
   \le &\power{\frac{1}{100}}{j}
   \end{align*}
      Given our lower bound on $j$ and the sum of the first terms of a geometric sequence, the sum of this probability over all choices of $j$ is $\power{\frac{1}{100}}{\frac{n}{100\sqrt{\log n}+100}}\cdot O(1)=o\left(\frac{1}{n^2}\right)$ and summing over $i$,
   we again obtain a probability which is $o(1)$.
   \end{proof}

We may now prove Lemmas \ref{l:secondstagelemma} and \ref{l:connectivity}.

\begin{proof}[Lemma \ref{l:connectivity}]
For any component $S$ of $S_i$, $N'(S)$ is empty, thus Lemma \ref{l:expansion} 
  implies  that almost surely any such component has more than half the vertices, and hence $S_i$ is connected.
  \end{proof}
  
\begin{proof}[Lemma \ref{l:secondstagelemma}]
  We need to show that  if   for every $S \subseteq S'_i$ with $|S| \le \frac{|S'_i|}{100}$,
  we have $|N'(S)| >2|S|$ then there are more than $\frac{20n^2}{\sqrt{\ln{n}}}$ pairs of vertices that can be the endpoints of a path on $V(P_i)$. 

  We claim first that for every $i$ there is a path on $V(P_i)$ both of whose endpoints are in $S'_i$. 
  To see this consider a path $P'$  on $V(P_i)$ with the maximum number of endpoints in $S'_i$. If $P'$  contains an endpoint $w$ not in $S'_i$ 
  then  $w$ is on the interior of some element of ${\cal F}_i$. Hence, as we have  already seen,  it has degree 2 and both  of its 
  neighbours  are  on $P'$. If   $w$ sees the other endpoint of $P'$ then $P'$ extends to a cycle $C$  on $V(P')$. Now $P'$ contains a vertex of $S_i$.
  Further, Maker has chosen no  edge between this vertex and a vertex not in $S'_i$; Hence  $C$ contains  an  edge between two vertices in $S'_i$ 
  and deleting this edge yields a path on $V(P_i)$ with both endpoints in $S'_i$. Otherwise, $w$ is joined to an internal vertex $x$ of $P$ by an edge 
  that is not on $P$. This means that $x$ has degree three in $M$ and is the endpoint of the element of ${\cal F}_i$ containing 
  $w$. Now letting $xy$ be the first edge on the subpath of $P$ from $x$ to $w$, there is a path $P'$ with edge set $E(P)-xy+wx$ that has  $y$ instead 
  of $x$ as an endpoint  and shares  its other endpoint  with $P$. Now, $y$ is in $S'_i$, as $x$ has only one neighbour outside of $S'_i$. This contradicts our choice of $P$, and completes the proof of our claim. 
  
  We now consider a fixed path  $P$ on $V(P_i)$ with endpoints  $u$ and $v$ which are both in $S'_i$.
  We construct a family of paths ${\cal F}$  starting from $P$  all of which have $v$ as an endpoint, using {\it limited rotations}.
  Given a path $Q$ in the family with endpoints $v$ and $w$, we say $Q'$ is a {\it rotation} of  $Q$ if for some edge $wx$
  with $x \in V(Q)-v$, $E(Q')=E(Q)+wx-xy$ for some edge $xy$ of $Q$. We say the rotation is {\it  limited} if $x$ belongs to 
  $S_i$ and hence $y$ belongs to $S'_i$. We choose the   family ${\cal F}$  of paths that can be reached from $P$  via a sequence of limited rotations. 
  We let $S$ be the set of vertices other than $v$ that are endpoints of some path in the family. 
  
  We claim that $N'(S)$ is contained in the union of the neighbours on $P$ of the vertices of $S$. This implies that $|N'(S)| <2|S|$ 
  and hence, applying Lemma \ref{l:expansion} that $|S| \ge \frac{|S'_i|}{100}$. Repeating this argument for every $u$ in $S$ we obtain, for each such $u$,  a set $S_u$ of at least $\frac{|S'_i|}{100}$
  vertices such that for every $w \in S_u$ there is a path on $V(P_i)$ with endpoints $u$ and $w$. Considering all the  $\frac{|S'_i|}{100}$
  families, we obtain  Lemma \ref{l:secondstagelemma}.
  
  To prove our claim, we need to show that for every $w$ in $S$ and edge $wx$ chosen by Maker with $x \in S_i$,   $x$ is in $S$ or adjacent along $P$ to a 
  vertex of $S$. To this end,  we consider a sequence of limited rotations which take us from $P$ to a path with endpoints $v$ and $w$.
  We stop the first time the rotation causes us  to delete an edge incident to $x$ or makes $w$ an endpoint. In the former case, 
  the edge  of $P$ deleted  has an endpoint in $S$ and we are done. In the latter case, we consider the edge $wx$. It is not on $P$ and $x$ has degree 2 in $P$  or we are done,
  So since we are in this case,  this edge is also not on the current path, and we can  perform a limited rotation 
  using it and hence $x$ is adjacent to an element of $S$ via the edge  of $P$ deleted in this rotation. 
\end{proof}
  
\bibliography{hamilton-cycle}{}

\begin{thebibliography}{1}

\bibitem{B+20}
N.~Brustle, S.~Clusiau, V.V. Narayan, N.~Ndiaye, B.~Reed, and B.~Seamone.
\newblock The speed and threshold of the biased perfect matching game.
\newblock Submitted, 2020.

\bibitem{CE78}
V.~Chvátal and P.~Erdös.
\newblock Biased positional games.
\newblock In B.~Alspach, P.~Hell, and D.J. Miller, editors, {\em Algorithmic
  Aspects of Combinatorics}, volume~2 of {\em Annals of Discrete Mathematics},
  pages 221 -- 229. Elsevier, 1978.

\bibitem{HS09}
Dan Hefetz and Sebastian Stich.
\newblock On two problems regarding the hamiltonian cycle game.
\newblock {\em The Electronic Journal of Combinatorics}, 16(R28), 2009.

\bibitem{Kri11}
Michael Krivelevich.
\newblock The critical bias for the hamiltonicity game is (1+o(1))n/ln n.
\newblock {\em Journal of the American Mathematical Society}, 24(1):125--131,
  2011.

\bibitem{MS17}
Mirjana Mikalački and Miloš Stojaković.
\newblock Winning fast in biased maker-breaker games.
\newblock {\em Electronic Notes in Discrete Mathematics}, 61:863 -- 868, 2017.
\newblock The European Conference on Combinatorics, Graph Theory and
  Applications (EUROCOMB'17).

\end{thebibliography}
\bibliographystyle{plain}

  \end{document}